\documentclass[a4paper,11pt,twoside,notitlepage]{amsart}

\usepackage{amsmath,amssymb,amsthm,mathtools}
\usepackage{xcolor}

\definecolor{mutedgray}{HTML}{667085}
\usepackage[ocgcolorlinks,linkcolor=blue,citecolor=blue,urlcolor=blue]{hyperref}
\usepackage{url}
\numberwithin{equation}{section}
\mathtoolsset{showonlyrefs}
\allowdisplaybreaks

\newtheorem{theorem}{Theorem}[section]
\newtheorem{proposition}[theorem]{Proposition}
\newtheorem{lemma}[theorem]{Lemma}
\newtheorem{corollary}[theorem]{Corollary}
\theoremstyle{remark}
\newtheorem{remark}[theorem]{Remark}

\DeclareMathOperator{\Diff}{Diff}
\DeclareMathOperator{\Id}{Id}
\newcommand{\p}{\partial}
\newcommand{\ol}{\overline}
\newcommand{\para}[1]{\vspace{3mm}\noindent\textbf{#1.}}

\title[Rigidity for the weighted $p$-Laplace equation]{A rigidity result for the weighted $p$-Laplace equation in higher dimensions}

\author[C.-L. Lin]{Ching-Lung Lin}
\address{Department of Mathematics, National Cheng Kung University, Tainan 701, Taiwan.}
\email{cllin2@mail.ncku.edu.tw}

\author[Y.-H. Lin]{Yi-Hsuan Lin}
\address{Department of Applied Mathematics, National Yang Ming Chiao Tung University, Hsinchu, Taiwan \& Fakult\"at f\"ur Mathematik, University of Duisburg-Essen, Essen, Germany}
\email{yihsuanlin3@gmail.com}

\keywords{Weighted $p$-Laplace equation, inverse boundary value problem, Dirichlet-to-Neumann map, anisotropic Calder\'on problem, rigidity}
\subjclass[2020]{35R30, 35J62, 35J92}

\hypersetup{
	pdftitle={Rigidity for the weighted p-Laplace equation in higher dimensions},
	pdfauthor={Yi-Hsuan Lin},
	pdfsubject={Rigidity of weights from the linearized boundary response at an affine datum},
	pdfkeywords={Weighted p-Laplace equation, inverse boundary value problem, Dirichlet-to-Neumann map, anisotropic Calderon problem, rigidity}
}

\begin{document}

\begin{abstract}
We study rigidity for the weighted $p$-Laplace equation on smooth bounded domains in dimensions three and higher. We show that a smooth positive weight sufficiently close to $1$ is identically $1$ if the derivative of its Dirichlet-to-Neumann map at one affine boundary value agrees with that for the unit weight. We also prove local rigidity at every smooth positive reference weight which is constant in one direction. The reference weight need not be close to $1$, and the unknown weight is only required to be sufficiently close to it, without a directional restriction. No convexity, analyticity, or agreement near the boundary is assumed. The proof combines linearization at a solution without critical points with recent rigidity results for the anisotropic Calder\'on problem. An identity for the linearized conductivity tensors removes the remaining diffeomorphism. This last step does not require smallness of either weight.
\end{abstract}

\maketitle

\section{Introduction}
\label{sec:introduction}

Let $\Omega\subset\mathbb R^n$ be a smooth bounded connected domain, let $p\in(1,\infty)\setminus\{2\}$, and let $\gamma\in C^\infty(\ol\Omega)$ be strictly positive. We consider the weighted $p$-Laplace equation
\begin{equation}\label{eq:weighted-p-laplace}
\begin{cases}
\operatorname{div}\big(\gamma|\nabla u|^{p-2}\nabla u\big)=0 & \text{in }\Omega,\\
u=f & \text{on }\p\Omega.
\end{cases}
\end{equation}
For each smooth boundary value $f$, there is a unique weak solution $u_f\in W^{1,p}(\Omega)$; see \cite[Theorem~1.1]{CarsteaFeizmohammadi2025}. It minimizes $p^{-1}\int_\Omega\gamma|\nabla u|^p\,dx$ among functions with trace $f$. The Dirichlet-to-Neumann (DN) map is defined weakly by
\begin{equation}\label{eq:nonlinear-DN}
\langle\Lambda_\gamma(f),h\rangle=\int_\Omega\gamma|\nabla u_f|^{p-2}\nabla u_f\cdot\nabla H\,dx,
\end{equation}
where $H$ is a smooth extension of $h$ to $\ol\Omega$. The weak equation shows that this definition is independent of the extension. For a smooth solution, $\Lambda_\gamma(f)=\gamma|\nabla u_f|^{p-2}\p_\nu u_f|_{\p\Omega}$, where $\nu$ is the Euclidean outward unit normal. We aim to determine $\gamma$ from these boundary measurements.

For $p=2$, the problem reduces to the scalar conductivity problem introduced in \cite{Calderon1980}. Global uniqueness for smooth scalar conductivities in dimensions $n\ge3$ was proved in \cite{SylvesterUhlmann1987}. We refer to \cite{FeldmanSaloUhlmann2025} for the linear problem, its anisotropic formulation and further references.

For the weighted $p$-Laplace equation, the DN map determines the boundary value of the weight \cite{SaloZhong2012} and its normal derivative \cite{Brander2016}. Enclosure and monotonicity methods have been used to detect inclusions and recover their convex hulls \cite{BranderKarSalo2015,BranderHarrachKarSalo2018}. Under a pointwise ordering assumption, uniqueness for the weight was proved in the plane and, for weights close to a constant, in higher dimensions \cite{GuoKarSalo2016}. Uniqueness for arbitrary smooth positive weights in the plane was later obtained in \cite[Theorem~1.2]{CarsteaFeizmohammadi2025}. The higher-dimensional theorem in \cite[Theorem~1.3]{CarsteaFeizmohammadi2025} assumes real analytic weights, smallness of a directional derivative, and a flat boundary patch. Smooth uniqueness is also known when both weights are constant in a common direction \cite[Corollary~1.1]{CarsteaFeizmohammadi2025}.

Higher-order linearization has been used in many nonlinear inverse problems. For semilinear elliptic equations, differentiating the DN map gives integral identities involving products of solutions of a linear equation \cite{FeizmohammadiOksanen2020,LassasLiimatainenLinSalo2021}. These identities contain information which is absent from the first linearization. The method also applies to partial data problems and simultaneous recovery of coefficients and unknown boundary components \cite{LassasLiimatainenLinSalo2021Partial}. For isotropic quasilinear conductivities, higher derivatives of the DN map determine the differentials of the conductivity by a completeness property for products of solutions \cite{CarsteaFeizmohammadiKianKrupchykUhlmann2021}. A density result for tensor products of at least three gradients of harmonic functions was proved in \cite{CarsteaFeizmohammadi2023}.

For \eqref{eq:weighted-p-laplace}, the linearization at a constant solution is degenerate when $p>2$, and the flux is not differentiable at zero gradient when $1<p<2$. We instead use a solution $u_0$ with no critical points. At this solution, the first derivative of the nonlinear DN map is the conductivity DN map for $A=\gamma DJ(\nabla u_0)$, where $J(a)=|a|^{p-2}a$; see \cite[Proposition~2.3]{CarsteaFeizmohammadi2025}. The matrix is anisotropic and depends on both the unknown weight and its background solution. The higher-order results mentioned above do not directly apply to this linearized equation.

For anisotropic conductivities, a change of coordinates fixing the boundary leaves the DN map unchanged. In dimensions $n\ge3$, the conductivity equation can be written as a Laplace--Beltrami equation, and one seeks to determine the metric up to this invariance. Uniqueness for real analytic metrics was established in \cite{LeeUhlmann1989,LassasUhlmann2001}; a proof by the Poisson embedding method is given in \cite{LassasLiimatainenSalo2020}. Results within a fixed conformal class are known in admissible and transversally anisotropic geometries \cite{DosSantosFerreiraKenigSaloUhlmann2009,DosSantosFerreiraKurylevLassasSalo2016}. More recently, rigidity relative to the Euclidean metric was proved for smooth metrics sufficiently close to it \cite{Lin2026NearEuclidean,Stefanov2026NearEuclidean}. These results require neither convexity of the domain nor a transversal product structure. Theorem~2 of \cite{Stefanov2026NearEuclidean} also allows a conformally Euclidean reference metric whose conformal factor is close to $1$.

In this work, we apply these rigidity results to the weighted $p$-Laplace equation. For the unit weight and affine boundary data, the solution is affine, and its linearized conductivity matrix is constant. A fixed linear change of variables makes this matrix the identity. By anisotropic rigidity, the unknown linearized conductivity and this constant matrix are related by a diffeomorphism fixing the boundary. We show that the form $\gamma DJ(\nabla u_0)$ forces the diffeomorphism to be the identity. The proof uses only the first derivative with respect to the boundary value. There is no ordering assumption on the weights.

\para{Main results}
Fix $\zeta\in\mathbb S^{n-1}$ and set $f_0=(\zeta\cdot x)|_{\p\Omega}$. We write $D_f\Lambda_\gamma(f_0)$ for the first Fr\'echet derivative of $f\mapsto\Lambda_\gamma(f)$ at $f_0$, with the weight $\gamma$ fixed. For a smooth boundary function $h$, its action is $D_f\Lambda_\gamma(f_0)[h]=\left.\frac{d}{dt}\right|_{t=0}\Lambda_\gamma(f_0+th)$. An equality $D_f\Lambda_{\gamma_1}(f_0)=D_f\Lambda_{\gamma_2}(f_0)$ means that $D_f\Lambda_{\gamma_1}(f_0)[h]=D_f\Lambda_{\gamma_2}(f_0)[h]$ as functions on $\p\Omega$ for every $h\in C^\infty(\p\Omega)$. Lemma~\ref{lem:noncritical-linearization} proves the differentiability in the function spaces used below.

We normalize the constant reference weight to $1$. Multiplying a weight by a positive constant does not change the Dirichlet solution and multiplies the DN map and its boundary derivative by the same constant. This normalization involves no loss of generality.

\begin{theorem}[Rigidity at the unit weight]\label{thm:constant-holder}
Let $n\ge3$, set $k=\lfloor n/2\rfloor+1$, and suppose that $0<\alpha<1$ and $k+\alpha>n/2+1$. There is $\varepsilon>0$, depending only on $\Omega,n,p,k,\alpha$ and $\zeta$, such that any positive $\gamma\in C^\infty(\ol\Omega)$ satisfying $\|\gamma-1\|_{C^{k,\alpha}(\ol\Omega)}<\varepsilon$ and
\begin{equation}\label{eq:constant-linearized-data}
D_f\Lambda_\gamma(f_0)=D_f\Lambda_1(f_0)
\end{equation}
must satisfy $\gamma=1$ in $\Omega$.
\end{theorem}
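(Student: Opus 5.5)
We first reduce the hypothesis to an anisotropic Calder\'on problem. For the unit weight, the solution with datum $f_0$ is $u^0=\zeta\cdot x$ with $\nabla u^0=\zeta$. For $\gamma$ close to $1$ in $C^{k,\alpha}$, let $u_\gamma$ be the solution with datum $f_0$. Schauder-type stability for the $p$-Laplace equation near a noncritical solution gives $\|u_\gamma-\zeta\cdot x\|_{C^{k+1,\alpha}}\lesssim\|\gamma-1\|_{C^{k,\alpha}}$. We would prove this by the implicit function theorem applied to the map $(\gamma,u)\mapsto\operatorname{div}(\gamma J(\nabla u))$ near $(1,\zeta\cdot x)$, where the operator is uniformly elliptic. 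In particular, $\nabla u_\gamma$ has no zeros. By Lemma~\ref{lem:noncritical-linearization}, $D_f\Lambda_\gamma(f_0)$ is the DN map of $A_\gamma=\gamma\,DJ(\nabla u_\gamma)$, where
\[
DJ(a)=|a|^{p-2}\Big(I+(p-2)\frac{a\otimes a}{|a|^2}\Big).
\]
The reference matrix is therefore the constant $A_1=I+(p-2)\zeta\otimes\zeta$. Choose a linear map $L$ with $L A_1 L^T/\det L=I$ (up to a constant), and push everything forward by $L$ to the domain $L\Omega$. Then \eqref{eq:constant-linearized-data} says that the pushed-forward conductivity $\tilde A_\gamma$ has the same DN map as the identity. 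Since $n\ge3$, we pass to the metric $g$ with $A=\sqrt{\det g}\,g^{-1}$. Here $g$ is $C^{k,\alpha}$-close to Euclidean with the smallness needed there, which is why $k+\alpha>n/2+1$ enters. The near-Euclidean rigidity of \cite{Lin2026NearEuclidean,Stefanov2026NearEuclidean} then gives a diffeomorphism $\Phi$ of $L\Omega$, fixing the boundary, with $\Phi_*\tilde A_\gamma=I$. Pulling back by $L$, we obtain a diffeomorphism $\Psi$ of $\ol\Omega$ with $\Psi|_{\p\Omega}=\Id$ and
\[
\Psi_*A_\gamma=A_1,\quad\text{i.e.}\quad \frac{D\Psi\,A_\gamma\,D\Psi^T}{|\det D\Psi|}\circ\Psi^{-1}=A_1 .
\]

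The main step is to show that $\Psi=\Id$ using the structure of $A_\gamma$. Set $v=u_\gamma\circ\Psi^{-1}$. The key observation is that $u_\gamma$ solves $\operatorname{div}(A_\gamma\nabla u_\gamma)=(p-1)^{-1}\operatorname{div}(\gamma J(\nabla u_\gamma))=0$, because $DJ(a)a=(p-1)J(a)$. Hence $v$ solves $\operatorname{div}(A_1\nabla v)=0$ with $v=\zeta\cdot x$ on $\p\Omega$. Since $A_1$ is constant, uniqueness gives $v=\zeta\cdot x$, so $u_\gamma=\zeta\cdot\Psi$.

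Next we compare the flux of $u_\gamma$ with its gradient. Differentiating gives $\nabla u_\gamma=D\Psi^T\zeta$. The pushed-forward flux satisfies $(D\Psi A_\gamma\nabla u_\gamma)/|\det D\Psi|=A_1\zeta=(p-1)\zeta$. On the other hand, $A_\gamma\nabla u_\gamma=(p-1)\gamma|\nabla u_\gamma|^{p-2}\nabla u_\gamma$. This yields
\[
\gamma|\nabla u_\gamma|^{p-2}\,D\Psi\,D\Psi^T\zeta=|\det D\Psi|\,\zeta .
\]
The full tensor identity gives more. We write $A_\gamma=\gamma|w|^{p-2}(I+(p-2)\hat w\otimes\hat w)$ with $w=D\Psi^T\zeta$, and $A_1=I+(p-2)\zeta\otimes\zeta$. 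Substituting into the tensor identity, the rank-one parts correspond exactly, since $D\Psi\,w\otimes w\,D\Psi^T=(D\Psi D\Psi^T\zeta)\otimes(\cdot)$. Subtracting them, and using $p\neq2$ to separate the two pieces, we get $\gamma|w|^{p-2}D\Psi D\Psi^T=|\det D\Psi|\,I$ modulo a multiple of $\zeta\otimes\zeta$. Combined with the displayed relation, this shows that $D\Psi D\Psi^T=c(x)I$, so $\Psi$ is conformal. For $n\ge3$, Liouville's theorem says $\Psi$ is a M\"obius map. A M\"obius map equal to the identity on the hypersurface $\p\Omega$, which contains a nonplanar, nonspherical-in-general open piece of at least $n-1\ge2$ dimensions, must be the identity. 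A cleaner alternative argument uses the fact that a conformal map that is the identity on an open boundary piece has $D\Psi=I$ there, and then rigidity of M\"obius maps applies. Hence $\Psi=\Id$, so $A_\gamma=A_1$. Comparing traces and rank-one parts then gives $\gamma|\nabla u_\gamma|^{p-2}=1$ and $\nabla u_\gamma=\zeta$. Therefore $\gamma=1$.

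I expect the main obstacle to be this algebraic/diffeomorphism-removal step, and in particular getting the conformality cleanly from the tensor identity with the $\zeta\otimes\zeta$ ambiguity. If the argument above leaves a residual term, I would fall back on the scalar identity $u_\gamma=\zeta\cdot\Psi$ together with the flux relation. Restricted to directions orthogonal to $\zeta$, these show that $D\Psi^T$ preserves $\zeta$ up to scaling. Propagating the level sets of $\zeta\cdot\Psi$ from the boundary should then force $\Psi$ to fix each hyperplane and act isometrically within it.
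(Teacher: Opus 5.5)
Your proposal is correct. Up to the conformality of $\Psi$ it follows the paper: the implicit-function construction of a noncritical $u_\gamma$, the identity $D_f\Lambda_\gamma(f_0)=\mathcal D_{A_\gamma}$, the linear normalization of $A_1$ to $I$, and near-Euclidean rigidity. It then uses $DJ(a)a=(p-1)J(a)$ to get $u_\gamma=\zeta\cdot\Psi$, and the tensor identity to get $D\Psi\,D\Psi^T=c\,I$. Two small points:
\begin{itemize}
\item The factor in $\operatorname{div}(A_\gamma\nabla u_\gamma)$ is $p-1$, not $(p-1)^{-1}$. This is harmless.
\item The metric rigidity theorems need equal induced boundary metrics and equal Laplace--Beltrami DN maps. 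The paper extracts these from equality of the conductivity DN maps by comparing principal symbols (Lemma~\ref{lem:boundary-metric}); you skip this step.
\end{itemize}

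The genuine difference is the last step. You use Liouville's theorem to make $\Psi$ a M\"obius map and then M\"obius rigidity. The paper writes $DF=\lambda O$ and combines
\begin{itemize}
\item $\int_\Omega\operatorname{tr}DF\,dx=\int_{\p\Omega}x\cdot\nu\,dS=n|\Omega|$,
\item $\int_\Omega\lambda^n\,dx=|\Omega|$,
\item $\operatorname{tr}O\le n$ and H\"older,
\end{itemize}
to force $\lambda\equiv1$ and $O\equiv I$. The paper's argument is elementary and works for every $n\ge2$, including the plane where conformal maps are flexible, so Proposition~\ref{prop:eliminate-diffeomorphism} is dimension-free. Yours relies on a deep classification theorem that is available only because $n\ge3$.

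Your first form of the M\"obius step also needs a repair. If $\Omega$ is a ball, inversion in the sphere $\p\Omega$ is a non-identity M\"obius map fixing $\p\Omega$ pointwise. You must rule it out by noting that it sends $\Omega$ to the exterior.

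Your alternative argument covers all cases once you justify it:
\begin{itemize}
\item At $x_0\in\p\Omega$, $D\Psi=\lambda O$ is the identity on $T_{x_0}\p\Omega$. Hence $\lambda=1$, and $O$ is either $I$ or the reflection across the tangent plane.
\item The reflection is excluded because $\Psi(\ol\Omega)\subset\ol\Omega$ forces inward vectors to go to the closed inward half-space.
\item A M\"obius map with $T(x_0)=x_0$ and $DT(x_0)=I$ has the form $x\mapsto x_0+\iota(\iota(x-x_0)+b)$, where $\iota(z)=z/|z|^2$. Such a map fixes no other finite point unless $b=0$. Since $T$ fixes the rest of $\p\Omega$, $T=\Id$.
\end{itemize}
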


The weight is smooth, but the smallness condition uses only a finite H\"older norm. For $n=3$, this is $C^{2,\alpha}$ with $\alpha>1/2$. We also have the following Sobolev version.

\begin{corollary}[Sobolev smallness]\label{cor:constant-sobolev}
Let $n\ge3$ and let $s>n/2+1$ be an integer. There is $\varepsilon>0$, depending only on $\Omega,n,p,s$ and $\zeta$, such that any smooth positive weight satisfying $\|\gamma-1\|_{H^s(\Omega)}<\varepsilon$ and \eqref{eq:constant-linearized-data} must equal $1$.
\end{corollary}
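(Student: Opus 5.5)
The plan is to reduce Corollary~\ref{cor:constant-sobolev} to Theorem~\ref{thm:constant-holder} by a Sobolev embedding into a H\"older space.

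Set $k=\lfloor n/2\rfloor+1$. Since $s$ is an integer with $s>n/2+1$, we have $s\ge k+1$ when $n$ is even. Indeed, then $k=n/2+1$ and $s>k$. When $n$ is odd, $k=(n+1)/2$ and $s\ge (n+3)/2=k+1$. In both cases $s-n/2>k$.

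Because $\Omega$ is a smooth bounded domain, the Morrey--Sobolev embedding gives
\begin{equation}
H^s(\Omega)\hookrightarrow C^{k,\alpha}(\ol\Omega)
\end{equation}
for some $\alpha\in(0,1)$. We need $\alpha$ with $\alpha<1$, $\alpha\le s-n/2-k$ (strictly less if that number is an integer), and $k+\alpha>n/2+1$. We check these case by case.

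\emph{$n$ even.} Here $s-n/2$ is an integer at least $k+1$, so every $\alpha\in(0,1)$ is admissible. The condition $k+\alpha>n/2+1=k$ holds for any $\alpha>0$; take $\alpha=1/2$.

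\emph{$n$ odd.} Here $s-n/2-k=s-n/2-(n+1)/2\ge 1$, so again any $\alpha\in(0,1)$ is admissible. The condition $k+\alpha>n/2+1$ becomes $\alpha>1/2$; take $\alpha=3/4$.

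With this choice of $\alpha$ there is a constant $C=C(\Omega,n,s)$ such that
\begin{equation}
\|\gamma-1\|_{C^{k,\alpha}(\ol\Omega)}\le C\|\gamma-1\|_{H^s(\Omega)}.
\end{equation}

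Let $\varepsilon_0$ be the constant from Theorem~\ref{thm:constant-holder} for these values of $k$ and $\alpha$; it depends on $\Omega,n,p,\zeta$ and $s$. Set $\varepsilon=\varepsilon_0/C$. A smooth positive weight with $\|\gamma-1\|_{H^s}<\varepsilon$ then satisfies $\|\gamma-1\|_{C^{k,\alpha}}<\varepsilon_0$. Together with \eqref{eq:constant-linearized-data}, Theorem~\ref{thm:constant-holder} gives $\gamma=1$.

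The only delicate step is the bookkeeping of the embedding exponents at the threshold: we need a H\"older index strictly above $n/2+1$, and this is exactly what forces the strict inequality $s>n/2+1$ on the integer $s$. Everything else is a direct application of the theorem.
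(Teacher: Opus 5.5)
\textbf{There is a genuine gap: the embedding is false for most admissible $s$.} Your reduction needs $H^s(\Omega)\hookrightarrow C^{k,\alpha}(\ol\Omega)$, and this fails unless $s$ is large. Sobolev embedding costs $n/2$ derivatives, so it requires $s-n/2\ge k+\alpha$, with strict inequality if $s-n/2$ is an integer. From $s\ge k+1$ you concluded $s-n/2>k$, which drops the $n/2$.

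Concretely, take $n=3$ and $s=3$. Then $k=2$ and $s-n/2=3/2$, so $H^3(\Omega)$ embeds only into $C^{1,1/2}(\ol\Omega)$ and controls no $C^{2,\alpha}$ norm. Your case checks contain the same slip. For even $n$ one only has $s-n/2\ge2$, not $\ge k+1=n/2+2$. For odd $n$, $s-n/2-k=s-n-\tfrac12$, which is $-\tfrac12$ at $n=s=3$.

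Theorem~\ref{thm:constant-holder} requires $k+\alpha>n/2+1$, so the embedding must reach a H\"older index above $n/2+1$. That happens only when $s>n+1$, i.e. $s\ge n+2$. Your argument therefore proves the corollary only in that range. For $n/2+1<s<n+2$ (for instance $s=3,4$ when $n=3$), rescaled bumps give smooth positive weights with $\|\gamma-1\|_{H^s}$ arbitrarily small and $\|\gamma-1\|_{C^{k,\alpha}}$ arbitrarily large. So Theorem~\ref{thm:constant-holder} cannot be invoked, and no better constant rescues the step. The threshold $s>n/2+1$ corresponds to $H^s\subset C^{1,\beta}$, not to the H\"older condition $k+\alpha>n/2+1$.

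\textbf{What the paper does instead.} The Sobolev statement is not obtained from the H\"older theorem by embedding; the paper reruns the whole chain in $H^s$, in the joint proof of Theorem~\ref{thm:constant-holder} and Corollary~\ref{cor:constant-sobolev}.
\begin{enumerate}
\item The Sobolev alternative of Lemma~\ref{lem:noncritical-linearization}, with $X=H^s(\Omega)$ and $Y=H^{s+1}(\Omega)$, gives the noncritical background $u_\gamma$. It also gives $D_f\Lambda_\gamma(f_0)=\mathcal D_{A_\gamma}$ and $\|A_\gamma-B(\zeta)\|_{H^s}\le C\|\gamma-1\|_{H^s}$.
\item The $H^s$ case of Lemma~\ref{lem:constant-conductivity} then produces $F$ fixing $\p\Omega$ with $F_*A_\gamma=B(\zeta)$. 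The essential input is the Sobolev near-Euclidean metric rigidity theorem \cite[Theorem~1]{Stefanov2026NearEuclidean}, not the H\"older result \cite[Theorem~1.1]{Lin2026NearEuclidean}.
\item Proposition~\ref{prop:eliminate-diffeomorphism} finishes as before.
\end{enumerate}
The missing idea is this direct use of a Sobolev-smallness rigidity theorem. At the regularity $s>n/2+1$ it cannot be replaced by the H\"older theorem plus an embedding.
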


We next consider an arbitrary fixed smooth positive reference weight which is constant in one direction. No closeness to $1$ is assumed for this reference weight.

\begin{theorem}[Rigidity at a reference weight constant in one direction]\label{thm:directional-reference}
	Let $n\ge3$ and let $s>n/2+1$ be an integer. Fix $\gamma_0\in C^\infty(\ol\Omega)$ with $\gamma_0>0$ on $\ol\Omega$ and $\zeta\cdot\nabla\gamma_0=0$ in $\Omega$. There is $\varepsilon>0$, depending on $\Omega,n,p,s,\zeta$ and $\gamma_0$, such that any positive $\gamma\in C^\infty(\ol\Omega)$ satisfying
	\begin{equation}\label{eq:directional-assumptions}
		\|\gamma-\gamma_0\|_{H^s(\Omega)}<\varepsilon
	\end{equation}
	and $D_f\Lambda_\gamma(f_0)=D_f\Lambda_{\gamma_0}(f_0)$ must satisfy $\gamma=\gamma_0$ in $\Omega$.
\end{theorem}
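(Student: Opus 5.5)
Since $\zeta\cdot\nabla\gamma_0=0$, the affine function $u_0=\zeta\cdot x$ solves \eqref{eq:weighted-p-laplace} for $\gamma_0$ with datum $f_0$: indeed $\operatorname{div}(\gamma_0 J(\zeta))=J(\zeta)\cdot\nabla\gamma_0=\zeta\cdot\nabla\gamma_0=0$, because $J(\zeta)=\zeta$. This solution has no critical points, so by Lemma~\ref{lem:noncritical-linearization} the derivative $D_f\Lambda_{\gamma_0}(f_0)$ is the DN map of the linear anisotropic equation with conductivity
\[
A_0=\gamma_0\,DJ(\zeta)=\gamma_0\big(I+(p-2)\zeta\otimes\zeta\big).
\]
For $\gamma$ close to $\gamma_0$ in $H^s$, I would let $u$ be the solution with datum $f_0$ and weight $\gamma$. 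Elliptic perturbation theory for the nondegenerate problem near $u_0$ (via an implicit function argument in $H^{s+1}$, with $s>n/2+1$ giving $C^{1,\beta}$ control) makes $\|\nabla u-\zeta\|$ small. Hence $u$ has no critical points, and $D_f\Lambda_\gamma(f_0)$ is the DN map of $A=\gamma\,DJ(\nabla u)$, which is $H^{s}$-close to $A_0$.

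\textbf{Reduction to a metric problem.} Apply the fixed linear map $L$ with $L^TA_0L$ conformal to the identity: stretch along $\zeta$ by $(p-1)^{-1/2}$. In the new coordinates $A_0$ becomes $\gamma_0$ times a constant multiple of $I$. In $n\ge3$, the conductivity DN map determines the metric $g=(\det A)^{1/(n-2)}A^{-1}$, so the reference metric is conformally Euclidean with factor a power of $\gamma_0$. This factor is not close to $1$, so the near-Euclidean results do not apply directly. Here the directional structure should help: $\gamma_0$ is independent of the (rescaled) $\zeta$ direction, so $g_0=c(x')(dt^2+|dx'|^2)$ is conformal to a product $\mathbb R\times(M_0,|dx'|^2)$. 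I would invoke the rigidity result from \cite{Stefanov2026NearEuclidean}, or its variant for reference metrics admitting such a translation-invariant structure, or else whatever reference-rigidity result for directionally constant weights the paper establishes before this theorem, together with the equality of DN maps. The conclusion sought is that $A=\Phi_*A_0$ for some diffeomorphism $\Phi$ fixing $\p\Omega$ and close to the identity.

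\textbf{Removing the diffeomorphism.} This is the step the abstract says needs no smallness. The relation $\Phi_*A_0=\gamma DJ(\nabla u)$ means
\[
\frac{D\Phi\,A_0\,D\Phi^T}{|\det D\Phi|}\circ\Phi^{-1}=\gamma\,DJ(\nabla u).
\]
The right side has eigenvector $\nabla u$ with eigenvalue $(p-1)\gamma|\nabla u|^{p-2}$, and eigenvalue $\gamma|\nabla u|^{p-2}$ of multiplicity $n-1$ on $\nabla u^\perp$. The left side has eigenvector $D\Phi\,\zeta$ relative to the structure $D\Phi D\Phi^T$. Comparing these eigenstructures should force $D\Phi^{T}$ to map $\nabla u$ to a multiple of $\zeta$ and to be conformal on the complement. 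Combined with the equation $\operatorname{div}(\gamma J(\nabla u))=0$, the pushforward divergence identity for $\Phi_*$, and $\Phi=\Id$ on $\p\Omega$, I expect to show that $u\circ\Phi$ solves the $\gamma_0$ problem with datum $f_0$. Uniqueness then gives $u\circ\Phi=\zeta\cdot x$. Since $D\Phi$ is then conformal in $n\ge3$, Liouville's theorem makes $\Phi$ Möbius, and fixing the boundary gives $\Phi=\Id$. Then $\gamma DJ(\nabla u)=\gamma_0DJ(\zeta)$ yields $\nabla u=\zeta$ and $\gamma=\gamma_0$.

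I expect the \textbf{main obstacle} to be the middle step: obtaining anisotropic rigidity at a conformal reference whose factor is far from $1$. My first attempt would be to exploit the $\zeta$-translation invariance by means of a conformal/CGO argument that is linearized around $g_0$.
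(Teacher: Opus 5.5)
Your first steps match the paper: the affine reference solution $\zeta\cdot x$, the implicit-function construction in $H^{s+1}$ giving noncritical $u_\gamma$ with $D_f\Lambda_\gamma(f_0)=\mathcal D_{A_\gamma}$, and the fixed linear map turning $A_{\gamma_0}$ into $\gamma_0 I$ so that $g_0$ is conformally Euclidean. The last two steps are where the problems are.

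\textbf{Removing the diffeomorphism.} Your claim that comparing eigenstructures forces $D\Phi^T$ to send $\nabla u$ to a multiple of $\zeta$, and forces $D\Phi$ to be conformal, is false pointwise. Put $L=D\Phi(x)$, $w=L\zeta$, $M=LL^T$ and $e=\nabla u/|\nabla u|$ at $\Phi(x)$. Then $\Phi_*A_0=A$ says only that $M+(p-2)w\otimes w=c\,(I+(p-2)e\otimes e)$ for some $c>0$, and $c$ is free because $\gamma$ is unknown. The only link between $w$ and $M$ is $w^TM^{-1}w=1$, and every such pair is realized by $L=M^{1/2}R$ with $R$ orthogonal and $R\zeta=M^{-1/2}w$. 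Now take $w\perp e$ with $|w|^2=c/(p-1)$. Then $M=c(I+(p-2)e\otimes e)-(p-2)w\otimes w$ is positive definite and admissible, but it is neither conformal nor aligned.

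The missing ingredient is global, and it must come first. Since $B(a)a=(p-1)J(a)$, the nonlinear equation makes $u$ harmonic for its own linearized conductivity $A=\gamma B(\nabla u)$. Hence $u\circ\Phi$ is $A_0$-harmonic with trace $f_0$. Uniqueness for the \emph{linear} $A_0$-Dirichlet problem then gives $u\circ\Phi=\zeta\cdot x$. Only after this does the chain rule $D\Phi^T(\nabla u\circ\Phi)=\zeta$, substituted into the transformation law, yield $D\Phi\,D\Phi^T=dI$; this is Proposition~\ref{prop:eliminate-diffeomorphism}. So your order must be reversed. Also, ``the $\gamma_0$ problem'' has to be the linear one, since $u\circ\Phi$ has no a priori reason to solve the nonlinear equation.

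Your Liouville ending is a valid alternative for $n\ge3$. You must exclude the inversion when $\p\Omega$ is a round sphere: it fixes $\p\Omega$ pointwise but does not map $\Omega$ to itself. The paper instead uses $\int_\Omega\operatorname{tr}D\Phi\,dx=n|\Omega|$, $\int_\Omega\lambda^n\,dx=|\Omega|$, $\operatorname{tr}O\le n$ and H\"older. That argument is elementary and works for every $n\ge2$.

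\textbf{The rigidity step.} You leave this step open, so the proposal as written does not prove the theorem. The paper makes no use of the product structure here. Lemma~\ref{lem:boundary-metric} turns $\mathcal D_{A_\gamma}=\mathcal D_{A_{\gamma_0}}$ into equal boundary metrics and $\Lambda_g=\Lambda_{g_0}$. The estimate $\|g-g_0\|_{H^s}\le C_{\gamma_0}\|\gamma-\gamma_0\|_{H^s}$ fixes $\varepsilon$. Then \cite[Theorem~2]{Stefanov2026NearEuclidean} is applied directly to the fixed metric $g_0=e^{2\varphi}\mathsf e$.

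Your hesitation is nonetheless justified. As the paper itself quotes that theorem, it requires $\|\varphi\|_{C^m}<\delta_*$, and the proof asserts ``no restriction on the size of $\varphi$'' without checking this. With that citation, your outline and the paper's argument as written both cover $\gamma_0$ whose logarithm is $C^m$-close to a constant, after normalizing $\gamma_0$ by a constant. For general $\gamma_0$, both rest on a stronger form of the cited result than the one quoted. Your CGO/translation-invariance idea is too undeveloped to supply it.
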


For $\zeta=e_1$, the reference weight can be a sufficiently small smooth perturbation of $1$ depending only on $x_2,\ldots,x_n$. The unknown weight may depend on all variables. In \cite[Corollary~1.1]{CarsteaFeizmohammadi2025}, both weights are constant in the same direction, with no smallness assumption. Here we remove the directional restriction on the unknown weight under \eqref{eq:directional-assumptions}.

Equality of the nonlinear DN maps in a neighborhood of $f_0$ gives the derivative equality in each of these results. In particular, equality of the full nonlinear DN maps suffices. We assume no agreement near $\p\Omega$, analyticity or convexity of $\Omega$. The results concern the specified reference weights, not an arbitrary pair of weights close to $1$.

The step which removes the diffeomorphism does not use a perturbation argument. By $DJ(a)a=(p-1)J(a)$, the nonlinear background solution is also harmonic for its linearized conductivity. A diffeomorphism between the conductivities must relate the background solutions. Substituting this relation into the conductivity transformation law gives $DF\,DF^T=dI$ for a positive function $d$. Using the boundary condition in an integral inequality, we obtain $DF=I$. We prove this in Proposition~\ref{prop:eliminate-diffeomorphism} without a smallness assumption on either weight.

\section{Noncritical solutions and the linearized DN map}
\label{sec:linearization}

For $a\in\mathbb R^n\setminus\{0\}$, set $J(a)=|a|^{p-2}a$ and
\begin{equation}\label{eq:linearization-matrix}
B(a)=DJ(a)=|a|^{p-2}\Big(I+(p-2)\frac{a\otimes a}{|a|^2}\Big).
\end{equation}
The eigenvalues of $B(a)$ are $|a|^{p-2}$ on $a^\perp$ and $(p-1)|a|^{p-2}$ in the direction of $a$. In particular, $B(a)$ is positive definite for $p>1$.

For a smooth positive definite symmetric matrix field $A$, let $\mathcal D_A: H^{1/2}(\p\Omega) \to H^{-1/2}(\p\Omega)$ be the DN map for $\operatorname{div}(A\nabla v)=0$. With Euclidean surface measure on the boundary, it is defined by $\langle\mathcal D_Ah,\psi\rangle=\int_\Omega A\nabla v_h\cdot\nabla H\,dx$, where $v_h$ is $A$-harmonic with trace $h$ and $H$ has trace $\psi$. For smooth data, $\mathcal D_Ah=\nu\cdot A\nabla v_h|_{\p\Omega}$.

We use the same boundary value $f_0=(\zeta\cdot x)|_{\p\Omega}$ for every weight sufficiently close to $1$. The next lemma constructs the corresponding solution as a small $C^1$ perturbation of $\zeta\cdot x$ and proves that its gradient stays away from zero. We also give the smooth dependence on nearby boundary values and the noncritical linearization of \cite[Propositions~2.2 and~2.3]{CarsteaFeizmohammadi2025} in the norms needed below.

\begin{lemma}\label{lem:noncritical-linearization}
Let $X=C^{k,\alpha}(\ol\Omega)$, $Y=C^{k+1,\alpha}(\ol\Omega)$ and $\mathcal B=C^{k+1,\alpha}(\p\Omega)$, with $k,\alpha$ as in Theorem~\ref{thm:constant-holder}. Alternatively, take $X=H^s(\Omega)$, $Y=H^{s+1}(\Omega)$ and $\mathcal B=H^{s+1/2}(\p\Omega)$, with $s$ as in Corollary~\ref{cor:constant-sobolev}. There are neighborhoods of $1$ in $X$ and of $f_0$ in $\mathcal B$ on which the solution map $(\gamma,f)\mapsto u_{\gamma,f}\in Y$ is smooth and $|\nabla u_{\gamma,f}|\ge1/2$ on $\ol\Omega$. Set $u_\gamma=u_{\gamma,f_0}$ and $A_\gamma=\gamma B(\nabla u_\gamma)$. On a fixed sufficiently small ball about $1$, we have
\begin{equation}\label{eq:background-estimates}
\|u_{\gamma_1}-u_{\gamma_2}\|_Y+\|A_{\gamma_1}-A_{\gamma_2}\|_X\le C\|\gamma_1-\gamma_2\|_X
\end{equation}
with a constant independent of the two weights. The nonlinear DN map is smooth with values in $C^{k,\alpha}(\p\Omega)$ in the H\"older case and $H^{s-1/2}(\p\Omega)$ in the Sobolev case, and
\begin{equation}\label{eq:linearized-DN}
D_f\Lambda_\gamma(f_0)=\mathcal D_{A_\gamma}.
\end{equation}
For smooth weights and smooth boundary data in these neighborhoods, the solution is smooth up to the boundary.
\end{lemma}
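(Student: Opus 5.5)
The plan is to apply the implicit function theorem to the nonlinear operator
\[
\mathcal F(\gamma,f,u)=\big(\operatorname{div}(\gamma J(\nabla u)),\,u|_{\p\Omega}-f\big),
\]
viewed as a map from $U\times V\times W\subset X\times\mathcal B\times Y$ to $X'\times\mathcal B$. Here $X'=C^{k-1,\alpha}(\ol\Omega)$, respectively $H^{s-1}(\Omega)$, and $W$ is the open set of $u\in Y$ with $|\nabla u|>1/4$ on $\ol\Omega$.

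The choices of $k,\alpha$ and $s$ make $Y\hookrightarrow C^{1}(\ol\Omega)$ and $X$ a Banach algebra embedded in $C^0$. Concretely, $k+\alpha>n/2+1$ gives $C^{k,\alpha}$ trivially, and $s>n/2+1$ gives $H^{s}\hookrightarrow C^1$. Consequently $W$ is open. Moreover, $J$ is real analytic on $\mathbb R^n\setminus\{0\}$, so $u\mapsto J(\nabla u)\in X^n$ is smooth on $W$ by composition (Nemytskii) estimates in Hölder or Sobolev algebras. The map $\mathcal F$ is therefore smooth.

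At the base point $(1,f_0,\zeta\cdot x)$ we have $\mathcal F=0$, since $\nabla u=\zeta$ is constant. The partial derivative in $u$ is
\[
v\mapsto\big(\operatorname{div}(B(\zeta)\nabla v),\,v|_{\p\Omega}\big).
\]
This is a constant-coefficient uniformly elliptic Dirichlet problem, which is an isomorphism $Y\to X'\times\mathcal B$ by Schauder theory (respectively $L^2$-Sobolev elliptic regularity on the smooth domain). The implicit function theorem then gives neighborhoods and a smooth solution map $(\gamma,f)\mapsto u_{\gamma,f}\in Y$. After shrinking the neighborhoods, $\|u_{\gamma,f}-\zeta\cdot x\|_{C^1}\le C\|u_{\gamma,f}-\zeta\cdot x\|_Y<1/2$, which yields $|\nabla u_{\gamma,f}|\ge1/2$.

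It remains to identify $u_{\gamma,f}$ with the weak solution of \eqref{eq:weighted-p-laplace}. This follows from uniqueness of the weak solution (\cite[Theorem~1.1]{CarsteaFeizmohammadi2025}), since $u_{\gamma,f}\in C^1$ solves the equation weakly.

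For the estimate \eqref{eq:background-estimates}, note that $\gamma\mapsto u_\gamma$ is smooth, so it is $C^1$ with bounded derivative on a closed ball where the implicit function theorem applies. The mean value inequality on this convex ball then gives the bound for $u$. The bound for $A_\gamma$ follows because $(\gamma,u)\mapsto\gamma B(\nabla u)$ is smooth from $X\times W$ to $X$, again using that $B$ is analytic away from $0$ and that $X$ is an algebra, together with another mean value estimate.

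The nonlinear DN map equals $\nu\cdot\gamma J(\nabla u_{\gamma,f})|_{\p\Omega}$. Since $\gamma J(\nabla u)\in X^n$, this is a composition of smooth maps followed by the trace $X\to C^{k,\alpha}(\p\Omega)$, respectively $H^{s-1/2}(\p\Omega)$. It is therefore smooth, and in this regularity it agrees with the weak definition \eqref{eq:nonlinear-DN}.

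Differentiating $\mathcal F(\gamma,f,u_{\gamma,f})=0$ in $f$ along $h$ shows that $\dot u=D_fu_{\gamma,f_0}[h]$ solves
\[
\operatorname{div}(\gamma B(\nabla u_\gamma)\nabla\dot u)=0,\qquad \dot u|_{\p\Omega}=h,
\]
so $\dot u$ is $A_\gamma$-harmonic with trace $h$. Differentiating the flux then gives
\[
D_f\Lambda_\gamma(f_0)[h]=\nu\cdot A_\gamma\nabla\dot u|_{\p\Omega}=\mathcal D_{A_\gamma}h,
\]
which is \eqref{eq:linearized-DN}. This holds for $h\in\mathcal B$, and the extension to all $h\in C^\infty(\p\Omega)$ is immediate.

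Finally, for smooth $\gamma$ and smooth $f$, the solution is $C^1$ with $|\nabla u|\ge1/2$, so the equation is uniformly elliptic and nondegenerate along $u$. Writing it in nondivergence form,
\[
\gamma B(\nabla u):\nabla^2u+J(\nabla u)\cdot\nabla\gamma=0,
\]
the coefficients depend analytically on $\nabla u$. A bootstrap with Schauder estimates, or the difference-quotient method, up to the boundary then yields $u\in C^\infty(\ol\Omega)$.

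I expect the main technical point to be the smoothness of the composition operators $u\mapsto J(\nabla u)$ and $u\mapsto B(\nabla u)$ in $H^s$, which needs $s-1>n/2$. This is exactly the hypothesis $s>n/2+1$, so the Moser-type composition estimates apply. The Hölder case is routine.
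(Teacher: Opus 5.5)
Your proposal is correct and follows essentially the same route as the paper. It uses the implicit function theorem at the affine solution $\zeta\cdot x$ with the constant-coefficient isomorphism $\operatorname{div}(B(\zeta)\nabla\cdot)$, identification with the unique weak solution, a mean-value estimate along segments, differentiation of the flux to obtain $\mathcal D_{A_\gamma}$, and a Schauder bootstrap. The differences are cosmetic: you build the boundary condition into $\mathcal F$ instead of using a trace right inverse with $w\in Y_0$, and you bound the derivative of the implicit function by its continuity at the center (which requires shrinking the ball, as the statement allows) rather than by the paper's explicit Neumann-series bound on $L_\gamma^{-1}$.
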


\begin{proof}
Let $Y_0=\{w\in Y:w|_{\p\Omega}=0\}$, and choose a bounded right inverse $E:\mathcal B\to Y$ of the trace map. Put $Z=C^{k-1,\alpha}(\ol\Omega)$ in the H\"older case and $Z=H^{s-1}(\Omega)$ in the Sobolev case. With $f=f_0+h$ and $u=\zeta\cdot x+Eh+w$, define
\begin{equation}\label{eq:forward-map}
\mathcal F(\gamma,h,w)=\operatorname{div}\big(\gamma J(\zeta+\nabla Eh+\nabla w)\big).
\end{equation}
Both choices of $X$ are Banach algebras. The maps $\nabla:Y\to X^n$ and $\operatorname{div}:X^n\to Z$ are bounded, and $Y\hookrightarrow C^1(\ol\Omega)$. We choose $(h,w)$ so that $\|\nabla Eh+\nabla w\|_{L^\infty}<1/2$ and restrict $\gamma$ to a positive neighborhood of $1$. The argument of $J$ stays in a fixed compact annulus away from zero. Smooth composition and multiplication in $X$ make $\mathcal F$ a smooth map to $Z$. At $(1,0,0)$, we have $\mathcal F=0$ and $D_w\mathcal F=L_0:=\operatorname{div}(B(\zeta)\nabla\cdot)$. The constant coefficient Dirichlet operator $L_0:Y_0\to Z$ is an isomorphism.

The implicit function theorem gives a smooth branch $w=w(\gamma,h)$ with $w(1,0)=0$. Since $E:\mathcal B\to Y$ is bounded and $Y\hookrightarrow C^1(\ol\Omega)$, we can choose neighborhoods of $1$ in $X$ and of $0$ in $\mathcal B$ such that $\|\nabla Eh+\nabla w(\gamma,h)\|_{L^\infty}<1/2$ throughout their product. The function $u_{\gamma,f_0+h}=\zeta\cdot x+Eh+w(\gamma,h)$ solves \eqref{eq:weighted-p-laplace} with boundary value $f_0+h$. Using $|\zeta|=1$, we obtain 
\[
|\nabla u_{\gamma,f_0+h}(x)|\ge1-\|\nabla Eh+\nabla w(\gamma,h)\|_{L^\infty}>1/2 \quad \text{for every }x\in\ol\Omega.
\] 
In particular, the fixed boundary value $f_0$ gives a solution without critical points for every weight in the chosen neighborhood of $1$. The gradient also has a uniform upper bound, and the weights are uniformly positive, so the linearized matrices are uniformly elliptic. The constructed solution belongs to $W^{1,p}(\Omega)$ and satisfies the weak equation. Strict convexity of the gradient energy identifies it with the unique weak solution.

To prove \eqref{eq:background-estimates}, first note that multiplication in $X$ gives $\|\operatorname{div}((A_\gamma-B(\zeta))\nabla\cdot)\|_{Y_0\to Z}\le C\|A_\gamma-B(\zeta)\|_X$. The right side tends to zero as $\gamma\to1$. A Neumann series about $L_0^{-1}$ gives a uniformly bounded inverse for $L_\gamma:=\operatorname{div}(A_\gamma\nabla\cdot):Y_0\to Z$ on a sufficiently small ball. For a weight direction $q\in X$, the derivative $r=D_\gamma u_\gamma[q]$ satisfies $L_\gamma r=-\operatorname{div}(qJ(\nabla u_\gamma))$ and $r|_{\p\Omega}=0$. The inverse bound and multiplication in $X$ give $\|r\|_Y\le C\|q\|_X$. Taking a convex ball and integrating along the segment between $\gamma_1$ and $\gamma_2$ proves the estimate for $u_{\gamma_1}-u_{\gamma_2}$. The estimate for the matrices follows from $A_\gamma=\gamma B(\nabla u_\gamma)$ and smooth composition in $X$.

The flux $\gamma J(\nabla u_{\gamma,f})$ depends smoothly on $(\gamma,f)$ with values in $X^n$. Taking its trace and dot product with $\nu$ is continuous into $C^{k,\alpha}(\p\Omega)$ or $H^{s-1/2}(\p\Omega)$. This proves smoothness of the DN map. For $v_h=D_fu_{\gamma,f_0}[h]$, differentiation of \eqref{eq:forward-map} and of the boundary condition gives
\begin{equation}\label{eq:first-variation}
\begin{cases}
\operatorname{div}(A_\gamma\nabla v_h)=0 & \text{in }\Omega,\\
v_h=h & \text{on }\p\Omega.
\end{cases}
\end{equation}
For a smooth boundary test function $\psi$, choose an extension $H$ independent of $f$. The smooth dependence of the flux allows us to differentiate \eqref{eq:nonlinear-DN}, giving $\langle D_f\Lambda_\gamma(f_0)h,\psi\rangle=\int_\Omega A_\gamma\nabla v_h\cdot\nabla H\,dx$. This is \eqref{eq:linearized-DN}. Density of smooth functions in $H^{1/2}(\p\Omega)$ extends an equality of these derivatives to the weak conductivity DN maps.

For smooth $\gamma$ and $f$, the equation can be written as $\gamma B(\nabla u):D^2u=-\nabla\gamma\cdot J(\nabla u)$. Our solution belongs to $C^{2,\beta}(\ol\Omega)$ for some $\beta>0$. Its principal matrix is uniformly elliptic and belongs to $C^{1,\beta}$, as does the right-hand side. By the boundary Schauder estimate, $u\in C^{3,\beta}(\ol\Omega)$. Repeating this argument proves smoothness up to the boundary. Solution branches in different regularity spaces agree on their common domain by uniqueness of the weak solution. Differentiating this identity also identifies their parameter derivatives.
\end{proof}

For the unit weight, $u_1=\zeta\cdot x$ and $A_1=B(\zeta)$. More generally, if $\zeta\cdot\nabla\gamma_0=0$, then $\operatorname{div}(\gamma_0J(\zeta))=\zeta\cdot\nabla\gamma_0=0$. The reference solution is $u_{\gamma_0}=\zeta\cdot x$, with $A_{\gamma_0}=\gamma_0B(\zeta)$.

\section{Conductivity tensors and coordinate changes}
\label{sec:coordinate-changes}

We use the convention $\Delta_g=\operatorname{div}_g\nabla_g$, so that $-\Delta_g$ is nonnegative. For a smooth Riemannian metric $g$, write $\Lambda_gf=\p_{\nu_g}u_f|_{\p\Omega}$, where $\Delta_gu_f=0$ and $u_f=f$ on $\p\Omega$. The normal $\nu_g$ is the outward unit normal for $g$. We write $\mathsf e$ for the Euclidean metric and $\Diff^\infty(\ol\Omega)$ for the smooth diffeomorphisms of $\ol\Omega$ onto itself.

For $n\ge3$, conductivity matrices and metrics are related by $A_g=(\det g)^{1/2}g^{-1}$ and $g_A=(\det A)^{1/(n-2)}A^{-1}$. The identity $\operatorname{div}(A_g\nabla u)=(\det g)^{1/2}\Delta_gu$ relates their equations. Their DN maps also involve the boundary surface measures, which we compare below.

\begin{lemma}\label{lem:boundary-metric}
Let $n\ge3$, and let $A_1,A_2$ be smooth positive definite symmetric conductivities on $\ol\Omega$. Set $g_j=g_{A_j}$. If $\mathcal D_{A_1}=\mathcal D_{A_2}$, then $g_1$ and $g_2$ induce the same metric on $\p\Omega$, and $\Lambda_{g_1}=\Lambda_{g_2}$.
\end{lemma}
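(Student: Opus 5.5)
We plan to prove the two claims in order. First we show that the boundary metrics agree, using the classical boundary determination for anisotropic conductivities. Then we convert the equality of conductivity DN maps into equality of the metric DN maps, using the relation between the two normal derivatives and surface measures.

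\emph{Step 1 (boundary determination).} By the boundary determination result of Lee--Uhlmann \cite{LeeUhlmann1989}, which in dimensions $n\ge2$ applies to smooth anisotropic conductivities, $\mathcal D_{A_1}=\mathcal D_{A_2}$ implies the following. In boundary normal coordinates for the Euclidean structure, the full symbol of $\mathcal D_A$ determines the Taylor series of $A$ at $\p\Omega$ up to the gauge. In particular, the principal symbol of $\mathcal D_A$ at $(x,\xi)\in T^*\p\Omega$ is
\[
\sqrt{(\nu\cdot A\nu)\,\big(A\xi\cdot\xi\big)-(A\nu\cdot\xi)^2},
\]
where $\xi$ is taken tangential. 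A cleaner invariant description is available. The principal symbol of the conductivity DN map equals $(\det g_A)^{1/2}$ times the principal symbol of $\Lambda_{g_A}$, computed with respect to Euclidean surface measure. The latter principal symbol is $|\xi|_{h_A}$, where $h_A$ is the metric induced by $g_A$ on $\p\Omega$. We therefore compute this principal symbol in terms of $g=g_A$ and show that it equals a fixed function of the induced metric $h=\iota^*g$.

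Concretely, $\operatorname{div}(A\nabla u)=(\det g)^{1/2}\Delta_gu$. Integration by parts then gives
\[
\nu\cdot A\nabla u=(\det g)^{1/2}\,\langle \nu_{\mathsf e},\nu_g\rangle_{\mathsf e}^{\flat}\,\p_{\nu_g}u .
\]
More precisely, $\nu\cdot A\nabla u\,dS_{\mathsf e}=\p_{\nu_g}u\,dS_g$, since both sides are the flux form $\iota^*(\star_g du)$. Hence
\[
\mathcal D_A=\frac{dS_g}{dS_{\mathsf e}}\,\Lambda_g,\qquad \frac{dS_g}{dS_{\mathsf e}}=\frac{(\det h)^{1/2}}{(\det h_{\mathsf e})^{1/2}},
\]
where $h$ is the induced metric and $h_{\mathsf e}$ is the induced Euclidean metric. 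The principal symbol of $\Lambda_g$ is $|\xi|_{h^{-1}}$. So the principal symbol of $\mathcal D_A$ is $\rho_h|\xi|_{h^{-1}}$ with $\rho_h=(\det h/\det h_{\mathsf e})^{1/2}$. Equality of the principal symbols gives $\rho_{h_1}^2h_1^{-1}=\rho_{h_2}^2h_2^{-1}$ as quadratic forms on $T^*\p\Omega$. Taking determinants in $n-1$ dimensions gives $\rho^{2(n-1)}\rho^{-2}$ on both sides, that is, $\rho_1^{2(n-3)}=\rho_2^{2(n-3)}$. When $n>3$ this yields $\rho_1=\rho_2$ and hence $h_1=h_2$.

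\emph{Step 2 (main obstacle: $n=3$).} When $n=3$, the combination $\rho_h^2h^{-1}$ is conformally invariant on the $2$-dimensional boundary. The principal symbol then fixes only the conformal class of $h$ together with that combination, so it does not determine $\rho$. This is the step we expect to be hardest. We would handle it with the next symbol term, following \cite{LeeUhlmann1989}. The subprincipal part of $\Lambda_g$ involves the second fundamental form and tangential derivatives of $h$. The multiplication by $\rho$ then contributes a term $\rho^{-1}\p\rho$-type commutator that can be read off, which determines $\log\rho_1-\log\rho_2$ up to a constant. Alternatively, we would argue directly. We write $A_j=(\det g_j)^{1/2}g_j^{-1}$ and note that, for $n=3$, $g_A=(\det A)A^{-1}$ is recovered pointwise from $A$. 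The tangential block of $g_A$ is a polynomial in the entries of $A$ at the boundary. Lee--Uhlmann boundary determination gives $A_1=A_2$ at $\p\Omega$ after a gauge fixing the boundary. Since this gauge has differential equal to the identity on tangential vectors at $\p\Omega$, the induced metrics agree. We would adopt this second route in all dimensions, as it avoids the symbol bookkeeping: boundary determination produces $F$ with $F|_{\p\Omega}=\Id$, and the jets of $F^*A_1$ and $A_2$ agree at $\p\Omega$. Hence $\iota^*g_{F^*A_1}=\iota^*g_{A_2}$, and $\iota^*g_{F^*A_1}=\iota^*F^*g_{A_1}=\iota^*g_{A_1}$ because $F\circ\iota=\iota$.

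\emph{Step 3.} With $h_1=h_2$, the densities $dS_{g_1}=dS_{g_2}$ agree. The flux identity $\mathcal D_{A_j}=(dS_{g_j}/dS_{\mathsf e})\Lambda_{g_j}$ then gives $\Lambda_{g_1}=\Lambda_{g_2}$. This uses that $A_j$-harmonic and $g_j$-harmonic functions coincide, by the divergence identity stated before the lemma.
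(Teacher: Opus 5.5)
Your final argument is valid, but the obstacle that drives Step 2 comes from an arithmetic slip. In Step 1,
$\det(\rho_h^2h^{-1})=\rho_h^{2(n-1)}/\det h=\rho_h^{2(n-2)}/\det h_{\mathsf e}$,
so the exponent is $2(n-2)$, not $2(n-3)$. Since $\det h_{\mathsf e}$ is common to both sides and $n-2>0$, equality of principal symbols already gives $\rho_1=\rho_2$, and hence $h_1=h_2$, for every $n\ge3$.

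Equivalently, the symbol identity reads $(\det h_1)h_1^{-1}=(\det h_2)h_2^{-1}$. For $n=3$ this says the adjugates of the $2\times2$ matrices $h_1,h_2$ agree, and the adjugate is linear in $h$ and determines it outright. In particular, $\rho_h^2h^{-1}$ is not conformally invariant on a two-dimensional boundary: under $h\mapsto e^{2\sigma}h$ it is multiplied by $e^{2\sigma}$. The degenerate case is $n=2$, where $\rho_h^2h^{-1}=1/\det h_{\mathsf e}$ carries no information (and $g_A$ is undefined anyway).

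With the exponent corrected, your Step 1 followed by your Step 3 is exactly the paper's proof, and the subprincipal-symbol discussion can be dropped. As sketched, that discussion would in any case only determine $\log\rho_1-\log\rho_2$ up to a constant, which would need a further argument.

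The route you say you would adopt in all dimensions quotes the full Lee--Uhlmann boundary determination. It gives a boundary-fixing map $F$, built from boundary normal coordinates of $g_{A_1}$ and $g_{A_2}$ and so defined only near $\partial\Omega$ (which suffices), with $\iota^*F^*g_{A_1}=\iota^*g_{A_1}$. This is logically sound. It does require checking that the Lee--Uhlmann DN map is the density-valued map $f\mapsto\iota^*(\star_g du)$. Your flux-form identity shows this map is $\mathcal D_A$ with Euclidean surface measure. If it were instead the function-valued $\Lambda_g$, the citation would presuppose the second half of the lemma.

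Compared with the paper, this route imports far more than needed. In dimension $n\ge3$ the Lee--Uhlmann argument begins with the same principal-symbol identification of the boundary metric that you set aside, so the citation essentially quotes the first claim of the lemma. The paper's computation, by contrast, is self-contained given only the principal symbol of $\Lambda_g$.

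Two minor corrections:
(i) the relevant boundary normal coordinates are those of $g_A$, not of the Euclidean structure;
(ii) the factor relating the two symbols is $dS_{g}/dS_{\mathsf e}$, not $(\det g_A)^{1/2}$, as your own later formula shows.
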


\begin{proof}
Let $h_j$ be the induced boundary metric of $g_j$, and let $h_{\mathsf e}$ be the induced Euclidean metric. Fix $j\in\{1,2\}$. For smooth boundary data $f,\psi$, let $u_f$ be the $A_j$-harmonic function with trace $f$ and let $H$ be a smooth extension of $\psi$. Green's formula gives
\[
\int_{\p\Omega}(\mathcal D_{A_j}f)\psi\,dS_{\mathsf e}
=\int_\Omega A_j\nabla u_f\cdot\nabla H\,dx
=\int_{\p\Omega}(\Lambda_{g_j}f)\psi\,dS_{g_j}.
\]
As operators on smooth functions, we have
\begin{equation}\label{eq:boundary-density}
\mathcal D_{A_j}=w_j\Lambda_{g_j},\quad w_j=\frac{dS_{g_j}}{dS_{\mathsf e}}
=\Big(\frac{\det h_j}{\det h_{\mathsf e}}\Big)^{1/2}.
\end{equation}
The principal symbol of $\Lambda_{g_j}$ in boundary coordinates is $(h_j^{\alpha\beta}\xi_\alpha\xi_\beta)^{1/2}$; see \cite[Proposition~1.3]{LeeUhlmann1989} and \cite[Theorem~1.1(ii)]{JoshiLionheart2005}. The principal symbol of $\mathcal D_{A_j}$ is $w_j$ times this expression. Equality of the conductivity maps gives $w_1^2h_1^{-1}=w_2^2h_2^{-1}$ after squaring the symbols. Substituting the density factors, we obtain
\begin{equation}\label{eq:boundary-symbol-matrices}
(\det h_1)h_1^{-1}=(\det h_2)h_2^{-1}.
\end{equation}
The matrices in \eqref{eq:boundary-symbol-matrices} have size $n-1$. Taking their determinants gives $(\det h_1)^{n-2}=(\det h_2)^{n-2}$. Both determinants are positive and $n-2>0$, so $\det h_1=\det h_2$. Returning to \eqref{eq:boundary-symbol-matrices} gives $h_1=h_2$ and $w_1=w_2$. Division by this common positive function in \eqref{eq:boundary-density} proves $\Lambda_{g_1}=\Lambda_{g_2}$.
\end{proof}

For a diffeomorphism $F$, the conductivity transformation law is
\begin{equation}\label{eq:conductivity-pushforward}
(F_*A)(y)=\frac{DF(x)A(x)DF(x)^T}{|\det DF(x)|},\quad x=F^{-1}(y).
\end{equation}
Changing variables in the weak equation shows that $u\circ F^{-1}$ is $F_*A$-harmonic whenever $u$ is $A$-harmonic. If $F$ fixes the boundary pointwise, the same change of variables gives $\mathcal D_{F_*A}=\mathcal D_A$. For the associated metrics, put $L=DF(x)$. Taking determinants in \eqref{eq:conductivity-pushforward} gives $\det(F_*A)(F(x))=(\det A(x))|\det L|^{2-n}$. The definition of $g_A$ now reads
\begin{equation}\label{eq:metric-covariance}
g_{F_*A}(F(x))=L^{-T}g_A(x)L^{-1}.
\end{equation}
In other words, $g_{F_*A}=F_*g_A=(F^{-1})^*g_A$.

We now remove the diffeomorphism for the linearized conductivities. The proof extends the planar argument in \cite[Section~3]{CarsteaFeizmohammadi2025} to all dimensions $n\ge2$.

\begin{proposition}[Elimination of the diffeomorphism]\label{prop:eliminate-diffeomorphism}
Let $n\ge2$ and $p>1$. Let $\gamma_j\in C^\infty(\ol\Omega)$ be positive, and let $u_j\in C^\infty(\ol\Omega)$ solve \eqref{eq:weighted-p-laplace} for $\gamma_j$ with the same smooth boundary value $f$. Suppose $|\nabla u_j|>0$ on $\ol\Omega$, and set $A_j=\gamma_jB(\nabla u_j)$. If $F\in\Diff^\infty(\ol\Omega)$ fixes $\p\Omega$ pointwise and $F_*A_1=A_2$, then $F=\Id$, $u_1=u_2$ and $\gamma_1=\gamma_2$.
\end{proposition}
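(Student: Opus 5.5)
The proof has three steps. First, both background solutions are shown to be harmonic for their own linearized conductivities, so $F$ intertwines $u_1$ and $u_2$. Second, substituting this into the transformation law \eqref{eq:conductivity-pushforward} forces $DF$ to be pointwise conformal. Third, an integral inequality using $F|_{\p\Omega}=\Id$ shows that a conformal $DF$ with these boundary values must equal $I$.

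\emph{Step 1: relating the background solutions.} By \eqref{eq:linearization-matrix}, $B(a)a=(p-1)|a|^{p-2}a=(p-1)J(a)$. Hence $A_j\nabla u_j=(p-1)\gamma_jJ(\nabla u_j)$, and the equation for $u_j$ gives $\operatorname{div}(A_j\nabla u_j)=0$. Changing variables in the weak equation shows that $u_1\circ F^{-1}$ is $F_*A_1=A_2$-harmonic. Since $F$ fixes $\p\Omega$ pointwise, $u_1\circ F^{-1}$ has trace $f$. Uniqueness for the uniformly elliptic linear Dirichlet problem then gives $u_1\circ F^{-1}=u_2$, that is, $u_1=u_2\circ F$. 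Write $L=DF(x)$, $a=\nabla u_1(x)$ and $b=\nabla u_2(F(x))$. The chain rule gives $a=L^Tb$, and both $a$ and $b$ are nonzero.

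\emph{Step 2: pointwise conformality.} Write the transformation law as $LA_1(x)L^T=|\det L|\,A_2(F(x))$ and test it on two kinds of vectors.
\begin{itemize}
\item Testing on $b$: since $L^Tb=a$, the left side is $LA_1a=(p-1)\gamma_1|a|^{p-2}La$. The right side is $(p-1)|\det L|\gamma_2|b|^{p-2}b$. Hence $La=cb$ with $c=|\det L|\gamma_2|b|^{p-2}/(\gamma_1|a|^{p-2})>0$, which gives $LL^Tb=cb$.
\item Testing on $v\perp b$: we have $a\cdot L^Tv=La\cdot v=c\,b\cdot v=0$, so the rank-one part of $A_1$ annihilates $L^Tv$. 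The left side is therefore $\gamma_1|a|^{p-2}LL^Tv$. The right side is $|\det L|\gamma_2|b|^{p-2}v$. Hence $LL^Tv=cv$ with the same constant $c$.
\end{itemize}
Therefore $DF\,DF^T=dI$ with $d=c>0$. This argument is uniform in $p>1$, including $p=2$, where the rank-one term is absent.

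\emph{Step 3: the integral inequality.} Since $F$ is a diffeomorphism equal to the identity on $\p\Omega$, it has degree one, so $\det DF>0$. Thus $DF=\sqrt d\,R$ with $R\in SO(n)$. Two facts follow from the boundary condition.
\begin{itemize}
\item By the divergence theorem and $F=\Id$ on $\p\Omega$, $\int_\Omega\p_jF_i\,dx=\int_{\p\Omega}x_i\nu_j\,dS=\delta_{ij}|\Omega|$. Taking the trace gives $n|\Omega|=\int_\Omega\sqrt d\,\operatorname{tr}R\,dx\le n\int_\Omega\sqrt d\,dx$, using $\operatorname{tr}R\le n$.
\item By the change of variables formula, $|\Omega|=\int_\Omega\det DF\,dx=\int_\Omega d^{n/2}\,dx$.
\end{itemize}
H\"older's inequality gives $\int_\Omega\sqrt d\,dx\le|\Omega|^{1-1/n}\big(\int_\Omega d^{n/2}\,dx\big)^{1/n}=|\Omega|$. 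All the inequalities are therefore equalities. Equality in H\"older's inequality forces $d$ to be constant, and the volume identity then gives $d=1$. Equality in the trace bound gives $\operatorname{tr}R=n$, hence $R=I$. So $DF=I$, and since $F=\Id$ on the boundary and $\Omega$ is connected, $F=\Id$.

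It then follows that $u_1=u_2$ and $A_1=A_2$. Since $B(\nabla u_1)=B(\nabla u_2)$ is invertible, $\gamma_1=\gamma_2$.

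The step I expect to need the most care is Step 2: one must check that the eigenvalue along $b$ and the eigenvalue on $b^\perp$ coincide, with the factors $(p-1)$ cancelling exactly. Step 3 is where the boundary condition is used, and it replaces any appeal to Liouville's theorem for conformal maps.
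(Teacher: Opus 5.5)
Your proof is correct and follows the paper's argument. You use the same intertwining $u_2=u_1\circ F^{-1}$, reach $DF\,DF^T=dI$ by testing on $b$ and $b^\perp$ where the paper compares the full matrices and applies them to $e$, and finish with the same divergence-theorem and H\"older equality argument. The one difference is your degree argument for $\det DF>0$: it is true but unnecessary, since the paper works with $|\det DF|$ and $\operatorname{tr}O\le n$ for an arbitrary orthogonal $O$.
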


\begin{proof}
Since $B(a)a=(p-1)J(a)$, the nonlinear equation for $u_j$ implies $\operatorname{div}(A_j\nabla u_j)=0$. The function $u_1\circ F^{-1}$ is $A_2$-harmonic. Its boundary value is $f$, because $F$ fixes the boundary. Uniqueness for the linear Dirichlet problem gives
\begin{equation}\label{eq:background-transform}
u_2=u_1\circ F^{-1}.
\end{equation}

Fix $x\in\Omega$ and put $y=F(x)$, $L=DF(x)$ and $a_j=\nabla u_j$. The chain rule in \eqref{eq:background-transform} gives $a_1(x)=L^Ta_2(y)$. Set $\ell=|a_2(y)|$, $e=a_2(y)/\ell$, $M=LL^T$ and $t=e^TMe$. These quantities satisfy $\ell>0$, $t>0$, $|a_1(x)|^2=\ell^2t$ and $La_1(x)=\ell Me$. Substituting them into \eqref{eq:conductivity-pushforward} gives
\begin{equation}\label{eq:pushed-linearization}
(F_*A_1)(y)
=\frac{\gamma_1(x)\ell^{p-2}t^{(p-2)/2}}{|\det L|}\Big(M+\frac{p-2}{t}(Me)\otimes(Me)\Big).
\end{equation}
On the other hand, $A_2(y)=\gamma_2(y)\ell^{p-2}(I+(p-2)e\otimes e)$. Comparing this with \eqref{eq:pushed-linearization} and cancelling the positive common factors, we obtain
\begin{equation}\label{eq:matrix-identity}
M+\frac{p-2}{t}(Me)\otimes(Me)=d\big(I+(p-2)e\otimes e\big),
\end{equation}
where $d=\gamma_2(y)|\det L|/(\gamma_1(x)t^{(p-2)/2})>0$. Applying both sides of \eqref{eq:matrix-identity} to $e$ gives $(p-1)Me=d(p-1)e$, since $e^TMe=t$. As $p>1$, we have $Me=de$ and $t=d$. The rank one terms in \eqref{eq:matrix-identity} are now both $(p-2)d\,e\otimes e$. After cancellation, $M=dI$.

It remains to use the boundary condition. We have $DF=\lambda O$, where $\lambda=d^{1/2}>0$ and $O=\lambda^{-1}DF$ is orthogonal. We do not need to assume that $O$ preserves orientation. Since $F=x$ on $\p\Omega$, the divergence theorem gives $\int_\Omega\operatorname{tr}DF\,dx=\int_{\p\Omega}F\cdot\nu\,dS=n|\Omega|$. Also $F$ maps $\Omega$ onto itself, and change of variables gives $\int_\Omega\lambda^n\,dx=\int_\Omega|\det DF|\,dx=|\Omega|$. Using $\operatorname{tr}O\le n$ and H\"older's inequality, we obtain
\begin{equation}\label{eq:conformal-identity}
\begin{aligned}
n|\Omega|=\int_\Omega\operatorname{tr}DF\,dx\le n\int_\Omega\lambda\,dx\le n|\Omega|^{1-1/n}\bigg(\int_\Omega\lambda^n\,dx\bigg)^{1/n}
=n|\Omega|.
\end{aligned}
\end{equation}
Both inequalities in \eqref{eq:conformal-identity} are equalities. The equality condition in H\"older's inequality, applied to $\lambda$ and $1$, makes $\lambda$ constant almost everywhere. Since $\lambda$ is continuous and $\int_\Omega\lambda^n\,dx=|\Omega|$, we have $\lambda=1$ on $\ol\Omega$. The continuous function $n\lambda-\operatorname{tr}DF$ is nonnegative and has zero integral, so $\operatorname{tr}O=n$ at every point. For an orthogonal matrix, $|O-I|_{\mathrm F}^2=2n-2\operatorname{tr}O$, where $|\cdot|_{\mathrm F}$ is the Frobenius norm. We obtain $O=I$ and $DF=I$. Connectedness of $\Omega$ makes $F-x$ constant, and its boundary value is zero. This proves $F=\Id$. Equation~\eqref{eq:background-transform} gives $u_1=u_2$. Now $A_1=A_2$ and positive definiteness of $B(\nabla u_1)$ give $\gamma_1=\gamma_2$.
\end{proof}

\section{Proofs of the rigidity results}
\label{sec:proofs}

We first state the conductivity consequence of the Euclidean metric rigidity theorems. Matrix norms are taken componentwise.

\begin{lemma}\label{lem:constant-conductivity}
Let $n\ge3$ and fix a constant positive definite symmetric matrix $A_*$. There is $\varepsilon>0$ such that, if a smooth positive definite symmetric conductivity $A$ satisfies $\|A-A_*\|_{C^{k,\alpha}(\ol\Omega)}<\varepsilon$ and $\mathcal D_A=\mathcal D_{A_*}$, then $F_*A=A_*$ for some $F\in\Diff^\infty(\ol\Omega)$ fixing $\p\Omega$ pointwise. Here $k,\alpha$ are as in Theorem~\ref{thm:constant-holder}. The same conclusion holds with smallness in $H^s(\Omega)$ for an integer $s>n/2+1$.
\end{lemma}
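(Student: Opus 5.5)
The plan is to reduce the statement to the Euclidean metric rigidity theorems of \cite{Lin2026NearEuclidean,Stefanov2026NearEuclidean} by a fixed linear change of variables.

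\textbf{Step 1: normalize the reference conductivity.} Let $P=A_*^{-1/2}$, a constant symmetric positive definite matrix, and set $T(x)=Px$. Put $\tilde\Omega=T(\Omega)$, which is again a smooth bounded connected domain. By \eqref{eq:conductivity-pushforward}, the constant matrix $A_*$ pushes forward to
\[
T_*A_*=\frac{PA_*P}{\det P}=(\det A_*)^{1/2}I,
\]
a positive constant multiple of the identity. For the unknown conductivity we use $\tilde A=T_*A$. The change of variables in the weak formulation gives $\mathcal D_{T_*A}=\mathcal D_{T_*A_*}$ on $\p\tilde\Omega$, after transporting boundary functions by $T$ and accounting for the fixed Jacobian factor relating the surface measures. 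That factor is the same for both conductivities, so equality of the DN maps is preserved. Moreover, $\|\tilde A-T_*A_*\|\le C\|A-A_*\|$ in $C^{k,\alpha}$ and in $H^s$, with $C$ depending only on $A_*$, since $T$ is linear.

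\textbf{Step 2: pass to metrics.} Set $g=g_{\tilde A}$ and $g_*=g_{T_*A_*}$. Since $T_*A_*=cI$ with $c=(\det A_*)^{1/2}$, we get
\[
g_*=(c^n)^{1/(n-2)}c^{-1}I=c^{2/(n-2)}\mathsf e,
\]
a constant multiple of the Euclidean metric. Lemma~\ref{lem:boundary-metric} gives $\Lambda_g=\Lambda_{g_*}$. Scaling the metric by a constant $\mu$ scales $\Lambda$ by $\mu^{-1/2}$ and does not change harmonicity. Hence $\Lambda_{c^{-2/(n-2)}g}=\Lambda_{\mathsf e}$. The map $A\mapsto g_A=(\det A)^{1/(n-2)}A^{-1}$ is a smooth algebraic operation on positive definite matrices near $cI$. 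Since $C^{k,\alpha}$ and $H^s$ with $s>n/2$ are Banach algebras closed under smooth composition, $c^{-2/(n-2)}g$ is $C\varepsilon$-close to $\mathsf e$ in the same norm.

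\textbf{Step 3: apply Euclidean rigidity.} The near-Euclidean theorems give $\Phi\in\Diff^\infty(\ol{\tilde\Omega})$ fixing $\p\tilde\Omega$ with $\Phi^*\mathsf e=c^{-2/(n-2)}g$, equivalently $\Phi_*g=g_*$. We take $\varepsilon$ below the threshold of those theorems, with the required norms $C^{k,\alpha}$ or $H^s$, $s>n/2+1$, on the domain $\tilde\Omega$.

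\textbf{Step 4: return to conductivities and to $\Omega$.} By \eqref{eq:metric-covariance}, $g_{\Phi_*\tilde A}=\Phi_*g_{\tilde A}=g_*$. In dimension $n\ge3$ the maps $A\mapsto g_A$ and $g\mapsto A_g$ are mutually inverse, so $\Phi_*\tilde A=T_*A_*$. Set $F=T^{-1}\circ\Phi\circ T$. This diffeomorphism of $\ol\Omega$ fixes $\p\Omega$ pointwise, and
\[
F_*A=T^{-1}_*\Phi_*T_*A=A_*.
\]

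\textbf{Main obstacle.} The delicate point is matching hypotheses exactly with the cited theorems. These concern the norm, the precise closeness class and domain, and the fact that their conclusion is stated for Dirichlet-to-Neumann maps with $g$-normal and $g$-surface measure. Lemma~\ref{lem:boundary-metric} handles the last of these. The others are handled by the constant rescaling in Step 2, which makes the reference metric exactly Euclidean.
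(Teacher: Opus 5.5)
Your proposal is correct and follows the same route as the paper. Both arguments normalize $A_*$ by a fixed linear change of variables, pass to geometric DN maps via Lemma~\ref{lem:boundary-metric}, apply the near-Euclidean rigidity theorems, and conjugate the resulting diffeomorphism back by the linear map.

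The only differences are cosmetic. The paper uses $S=(\det A_*)^{1/(2(n-2))}A_*^{-1/2}$, so the normalized conductivity is exactly $I$ and no metric rescaling is needed. You should also state explicitly that Lemma~\ref{lem:boundary-metric} gives equality of the induced boundary metrics, which your constant rescaling preserves, because the cited theorems assume it as a hypothesis.
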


\begin{proof}
Let $d_*=\det A_*$, set $S=d_*^{1/(2(n-2))}A_*^{-1/2}$, and put $\Omega'=S\Omega$. The map $S$ is fixed and positive definite. We have $\det S=d_*^{1/(n-2)}$ and $SA_*S^T=d_*^{1/(n-2)}I$. The transformed conductivity $\widehat A(y)=SA(S^{-1}y)S^T/\det S$ has constant reference $\widehat A_*=I$.

For smooth boundary functions $f',\psi'$ on $\p\Omega'$, let $v$ be $A$-harmonic with trace $f'\circ S$, and put $v'=v\circ S^{-1}$. If $H'$ extends $\psi'$ and $H=H'\circ S$, the chain rule and $dy=(\det S)\,dx$ give
\[
\int_{\Omega'}\widehat A\nabla v'\cdot\nabla H'\,dy
=\int_\Omega A\nabla v\cdot\nabla H\,dx.
\]
This proves that $v'$ is $\widehat A$-harmonic and that $\langle\mathcal D_{\widehat A}^{\Omega'}f',\psi'\rangle=\langle\mathcal D_A^\Omega(f'\circ S),\psi'\circ S\rangle$. Applying this identity to both $A$ and $A_*$ proves $\mathcal D_{\widehat A}^{\Omega'}=\mathcal D_I^{\Omega'}$.

Write $X=C^{k,\alpha}$ or $X=H^s$, according to the smallness condition. The fixed linear change gives $\|\widehat A-I\|_{X(\Omega')}\le C\|A-A_*\|_{X(\Omega)}$. The matrix function $\mathcal C(P)=(\det P)^{1/(n-2)}P^{-1}$ is smooth near $I$. Using $\mathcal C(I+H)-\mathcal C(I)=\int_0^1D\mathcal C(I+tH)[H]\,dt$, smooth composition and multiplication in $X$, we obtain
\begin{equation}\label{eq:constant-metric-estimate}
\|g-\mathsf e\|_{X(\Omega')}\le C\|A-A_*\|_{X(\Omega)},\quad g=g_{\widehat A}.
\end{equation}
The constant is uniform for $A$ in a sufficiently small fixed ball.

Lemma~\ref{lem:boundary-metric} gives $g|_{T\p\Omega'}=\mathsf e|_{T\p\Omega'}$ and $\Lambda_g=\Lambda_{\mathsf e}$. For sufficiently small $\varepsilon$, estimate \eqref{eq:constant-metric-estimate} allows us to apply \cite[Theorem~1.1]{Lin2026NearEuclidean} in the H\"older case or \cite[Theorem~1]{Stefanov2026NearEuclidean} in the Sobolev case. We get $g=Y^*\mathsf e$ for a smooth diffeomorphism $Y$ fixing $\p\Omega'$. By \eqref{eq:metric-covariance}, $g_{Y_*\widehat A}=Y_*g=\mathsf e$, and the conductivity--metric correspondence gives $Y_*\widehat A=I$. Set $F=S^{-1}\circ Y\circ S$. The conductivity transformation law shows that $F_*A=A_*$. The boundary condition for $Y$ gives $F|_{\p\Omega}=\Id$.
\end{proof}

\begin{proof}[Proof of Theorem~\ref{thm:constant-holder} and Corollary~\ref{cor:constant-sobolev}]
By Lemma~\ref{lem:noncritical-linearization}, the solution $u_\gamma$ with trace $f_0$ has no critical points. The reference solution is $u_1=\zeta\cdot x$, and the corresponding matrices are $A_\gamma=\gamma B(\nabla u_\gamma)$ and $A_1=B(\zeta)$. Estimate \eqref{eq:background-estimates} gives $\|A_\gamma-A_1\|_X\le C\|\gamma-1\|_X$ in the chosen H\"older or Sobolev norm. The hypothesis \eqref{eq:constant-linearized-data} and \eqref{eq:linearized-DN} give $\mathcal D_{A_\gamma}=\mathcal D_{A_1}$. Choose $\varepsilon$ small enough for Lemma~\ref{lem:constant-conductivity}. We obtain a diffeomorphism $F$ fixing $\p\Omega$ with $F_*A_\gamma=A_1$. Proposition~\ref{prop:eliminate-diffeomorphism}, applied to $u_\gamma$ and $u_1$, gives $\gamma=1$.
\end{proof}

For the nonconstant reference, we use \cite[Theorem~2]{Stefanov2026NearEuclidean}. On a fixed smooth compact domain in dimension $n\ge3$, let $s,m$ be integers with $s>n/2+1$, $m\ge s+2$ and $m>3n/2+5$. There are $\delta_*,\varepsilon_*>0$, uniform over smooth reference metrics $g_0=e^{2\varphi}\mathsf e$ with $\|\varphi\|_{C^m}<\delta_*$, for which the following statement holds. If $g$ is smooth, $\|g-g_0\|_{H^s}<\varepsilon_*$, and $g,g_0$ have the same induced boundary metric and geometric DN map, then $g=Y^*g_0$ for a smooth diffeomorphism $Y$ fixing the boundary.

\begin{proof}[Proof of Theorem~\ref{thm:directional-reference}]
Since $|\zeta|=1$ and $\zeta\cdot\nabla\gamma_0=0$, we have $\operatorname{div}(\gamma_0J(\zeta))=0$. The reference solution is $u_{\gamma_0}=\zeta\cdot x$, and $A_{\gamma_0}=\gamma_0B(\zeta)$. We first construct solutions for weights near this fixed $\gamma_0$. The construction is centered at $\gamma_0$, which is not assumed to be close to $1$.
	
	Set $X=H^s(\Omega)$, $Y=H^{s+1}(\Omega)$, $Y_0=Y\cap H_0^1(\Omega)$ and $\mathcal B=H^{s+1/2}(\p\Omega)$. Choose a bounded right inverse $E:\mathcal B\to Y$ of the trace map. Use the map $\mathcal F$ in \eqref{eq:forward-map} with $f=f_0+h$ and $u=\zeta\cdot x+Eh+w$, now near $(\gamma,h,w)=(\gamma_0,0,0)$. We restrict $\gamma$ to a positive neighborhood of $\gamma_0$ and $(h,w)$ so that $\|\nabla Eh+\nabla w\|_{L^\infty}<1/2$. Multiplication and smooth composition in $H^s$ make $\mathcal F$ a smooth map from this neighborhood in $X\times\mathcal B\times Y_0$ to $H^{s-1}(\Omega)$. At its center, $\mathcal F(\gamma_0,0,0)=0$ and
	\[
	D_w\mathcal F(\gamma_0,0,0)[v]=\operatorname{div}(\gamma_0B(\zeta)\nabla v)=:L_{\gamma_0}v.
	\]
	The coefficient matrix is smooth and satisfies $\xi^T\gamma_0(x)B(\zeta)\xi\ge(\min_{\ol\Omega}\gamma_0)\min\{1,p-1\}|\xi|^2$ for every $x\in\ol\Omega$ and $\xi\in\mathbb R^n$. Coercivity gives unique solvability of the weak Dirichlet problem. Boundary elliptic regularity gives $v\in H^{s+1}(\Omega)$ for a right-hand side in $H^{s-1}(\Omega)$, together with $\|v\|_{H^{s+1}}\le C_{\gamma_0}\|L_{\gamma_0}v\|_{H^{s-1}}$ for $v\in Y_0$. In particular, $L_{\gamma_0}:Y_0\to H^{s-1}(\Omega)$ is an isomorphism.

The implicit function theorem gives a smooth branch $w=w(\gamma,h)$ with $w(\gamma_0,0)=0$. The embedding $Y\hookrightarrow C^1(\ol\Omega)$ allows us to choose a neighborhood of $\gamma_0$ and a neighborhood of $0$ in $\mathcal B$ on whose product $\|\nabla Eh+\nabla w(\gamma,h)\|_{L^\infty}<1/2$. The solution $u_{\gamma,f_0+h}=\zeta\cdot x+Eh+w(\gamma,h)$ has the prescribed boundary value and satisfies $|\nabla u_{\gamma,f_0+h}|>1/2$ on $\ol\Omega$. Strict convexity of the gradient energy identifies it with the unique weak solution. For smooth weights and boundary data, the boundary regularity argument at the end of the proof of Lemma~\ref{lem:noncritical-linearization} gives smoothness up to the boundary.

We prove a uniform estimate on a neighborhood of this fixed reference weight. Write $u_\gamma=u_{\gamma,f_0}$, $A_\gamma=\gamma B(\nabla u_\gamma)$ and $L_\gamma=\operatorname{div}(A_\gamma\nabla\cdot)$. Continuity of the solution branch and smooth composition give $A_\gamma\to A_{\gamma_0}$ in $H^s$ as $\gamma\to\gamma_0$ in $H^s$. For $v\in Y_0$, multiplication in $H^s$ and boundedness of differentiation give
	\[
	\|(L_\gamma-L_{\gamma_0})v\|_{H^{s-1}}\le C\|A_\gamma-A_{\gamma_0}\|_{H^s}\|v\|_{H^{s+1}}.
	\]
	Choose $r_0>0$ so that the construction above applies and $\|L_{\gamma_0}^{-1}(L_\gamma-L_{\gamma_0})\|_{Y_0\to Y_0}<1/2$ whenever $\|\gamma-\gamma_0\|_{H^s}<r_0$. A Neumann series gives a uniform bound for $L_\gamma^{-1}:H^{s-1}(\Omega)\to Y_0$ on this neighborhood. If $q\in H^s(\Omega)$ and $r_q=D_\gamma u_\gamma[q]$, differentiation of the equation gives $L_\gamma r_q=-\operatorname{div}(qJ(\nabla u_\gamma))$ and $r_q|_{\p\Omega}=0$. The inverse bound and multiplication in $H^s$ imply $\|r_q\|_{H^{s+1}}\le C_{\gamma_0}\|q\|_{H^s}$. Integrating along $\gamma_t=\gamma_0+t(\gamma-\gamma_0)$, $0\le t\le1$, gives $\|u_\gamma-\zeta\cdot x\|_{H^{s+1}}\le C_{\gamma_0}\|\gamma-\gamma_0\|_{H^s}$. Each $\gamma_t$ belongs to the same neighborhood. Using $A_\gamma-A_{\gamma_0}=(\gamma-\gamma_0)B(\nabla u_\gamma)+\gamma_0(B(\nabla u_\gamma)-B(\zeta))$ and smooth composition in $H^s$, we obtain
	\[
	\|u_\gamma-\zeta\cdot x\|_{H^{s+1}(\Omega)}+\|A_\gamma-A_{\gamma_0}\|_{H^s(\Omega)}\le C_{\gamma_0}\|\gamma-\gamma_0\|_{H^s(\Omega)}.
	\]
	The constants may depend on $\gamma_0$, but they are independent of $\gamma$ with $\|\gamma-\gamma_0\|_{H^s}<r_0$.

The flux $\gamma J(\nabla u_{\gamma,f})$ depends smoothly on $(\gamma,f)$ with values in $H^s(\Omega)^n$, and its normal trace is a smooth $H^{s-1/2}(\p\Omega)$-valued map. For a smooth boundary function $h$, the derivative $v_h=D_fu_{\gamma,f_0}[h]$ satisfies $\operatorname{div}(A_\gamma\nabla v_h)=0$ and $v_h|_{\p\Omega}=h$. If $\psi$ is a smooth boundary test function and $H$ is an extension independent of $f$, differentiating the weak flux pairing gives $\langle D_f\Lambda_\gamma(f_0)[h],\psi\rangle=\int_\Omega A_\gamma\nabla v_h\cdot\nabla H\,dx$. This proves $D_f\Lambda_\gamma(f_0)=\mathcal D_{A_\gamma}$ for every weight in the chosen neighborhood, including $\gamma_0$. Equality on smooth boundary functions extends to the weak conductivity DN maps by density.
	
	Put $B_\zeta=B(\zeta)$, fix $S=(\det B_\zeta)^{1/(2(n-2))}B_\zeta^{-1/2}$, and set $\Omega'=S\Omega$. For $\eta=\gamma_0$ or $\eta=\gamma$, write $\widehat A_\eta(y)=SA_\eta(S^{-1}y)S^T/\det S$. The map $S$ depends only on $n,p,\zeta$. The calculation in Lemma~\ref{lem:constant-conductivity} gives $SB_\zeta S^T/\det S=I$. Since $A_{\gamma_0}=\gamma_0B_\zeta$, we have $\widehat A_{\gamma_0}(y)=\gamma_0(S^{-1}y)I$. Its associated metric is
	\begin{equation}\label{eq:conformal-reference}
		g_0=g_{\widehat A_{\gamma_0}}=e^{2\varphi}\mathsf e,\quad \varphi(y)=\frac{1}{n-2}\log\gamma_0(S^{-1}y).
	\end{equation}
	This is a fixed smooth conformally Euclidean metric. There is no restriction on the size of $\varphi$.

	Set $g=g_{\widehat A_\gamma}$. The fixed linear change of variables gives $\|\widehat A_\gamma-\widehat A_{\gamma_0}\|_{H^s(\Omega')}\le C_{\gamma_0}\|\gamma-\gamma_0\|_{H^s(\Omega)}$. After decreasing $r_0$, both transformed conductivities have eigenvalues in a fixed compact subinterval of $(0,\infty)$ and have uniformly bounded $H^s$ norms. These bounds also hold along the segment $P_t=\widehat A_{\gamma_0}+t(\widehat A_\gamma-\widehat A_{\gamma_0})$. For $\mathcal C(P)=(\det P)^{1/(n-2)}P^{-1}$, the identity $\mathcal C(\widehat A_\gamma)-\mathcal C(\widehat A_{\gamma_0})=\int_0^1D\mathcal C(P_t)[\widehat A_\gamma-\widehat A_{\gamma_0}]\,dt$, together with smooth composition and multiplication in $H^s$, gives
	\begin{equation}\label{eq:reference-metric-estimate}
		\|g-g_0\|_{H^s(\Omega')}\le C_{\gamma_0}\|\gamma-\gamma_0\|_{H^s(\Omega)}.
	\end{equation}
	Let $\varepsilon_*>0$ be the threshold in \cite[Theorem~2]{Stefanov2026NearEuclidean} for the fixed metric $g_0$ on $\ol\Omega'$ and the chosen integer $s$. Choose $0<\varepsilon<r_0$ so that $C_{\gamma_0}\varepsilon<\varepsilon_*$, with $C_{\gamma_0}$ taken from \eqref{eq:reference-metric-estimate}.
	
	The data assumption and the linearization proved above give $\mathcal D_{A_\gamma}=\mathcal D_{A_{\gamma_0}}$. The change of variables in the weak pairing transfers this equality to $\Omega'$, as in Lemma~\ref{lem:constant-conductivity}. Lemma~\ref{lem:boundary-metric} gives equality of the induced boundary metrics and $\Lambda_g=\Lambda_{g_0}$. Applying the conformal metric rigidity theorem with \eqref{eq:conformal-reference} and \eqref{eq:reference-metric-estimate}, we obtain $g=Y^*g_0$ for a smooth diffeomorphism $Y$ fixing $\p\Omega'$. Equation~\eqref{eq:metric-covariance} gives $Y_*\widehat A_\gamma=\widehat A_{\gamma_0}$. The map $F=S^{-1}\circ Y\circ S$ fixes $\p\Omega$ and satisfies $F_*A_\gamma=A_{\gamma_0}$. Both background solutions are smooth, have nonvanishing gradients, and have the same boundary value $f_0$. Proposition~\ref{prop:eliminate-diffeomorphism} gives $\gamma=\gamma_0$.

\end{proof}

\begin{remark}\label{rem:reference-class}
Proposition~\ref{prop:eliminate-diffeomorphism} applies to both weights in the same way. The restriction on the reference weight comes from the metric rigidity theorem used to obtain the diffeomorphism. The inverse data in our results involve only the first boundary derivative, with no derivative with respect to the unknown weight.
\end{remark}

\section*{Statements and declarations}

\para{Data availability statement} No datasets were generated or analyzed in this study.

\para{Conflict of interest} The author declares no conflict of interest.

\para{Acknowledgments} The author is partially supported by the National Science and Technology Council (NSTC), Taiwan, under the project 113-2115-M-A49-017-MY3. The author also acknowledges financial support from the Alexander von Humboldt Foundation through the Henriette Herz Scouting Programme, hosted by Universit\"at Duisburg-Essen, Germany.The density

\bibliographystyle{alpha}
\bibliography{refs}

\end{document}